\title[$A_\f{q}$-components of geometric classes]{$A_\f{q}$-components of geometric classes in compact Hermitian locally symmetric spaces}
\author{Arghya Mondal}
\address{School of Mathematics, Tata Institute of Fundamental Research, Homi Bhaba Road, Navy Nagar, Colaba, Mumbai 400005, India}
\email{arghya@math.tifr.res.in}
\date{}
\newtheorem{theorem}{Theorem}[section]
\newtheorem{lemma}[theorem]{Lemma}
\newtheorem{cor}[theorem]{Corollary}
\newenvironment{definition}[1][Definition]{\begin{trivlist}
\item[\hskip \labelsep {\bfseries #1}]}{\end{trivlist}}
\newenvironment{remark}[1][Remark]{\begin{trivlist}
\item[\hskip \labelsep {\bfseries #1}]}{\end{trivlist}}
\def\f{\mathfrak}
\def\r{\mathbb{R}}
\def\c{\mathbb{C}}
\def\n{\mathbb{N}}
\def\G{\Gamma}
\def\b{\backslash}
\begin{document}

\maketitle

\begin{abstract}
Let $\G\b G/K$ be a compact Hermitian locally symmetric space, where $G$ is simple. We study the components of a de Rham cohomology class of $\G\b G/K$, with respect to the Matsushima decomposition, where the class is obtained by taking Poincar\'e dual of a totally geodesic complex analytic submanifold. Using an extension of the vanishing result of Kobayashi and Oda, we specify the existence of certain components of such cohomology classes when $G=\text{SU}(p,q), 5\le p\le q$.
\end{abstract}

\noindent\textit{All real Lie algebras and their subspaces will be denoted with a $0$ subscript. Dropping of this subscript will denote the complexification.}

\section{Introduction}
Let $G$ be a linear semisimple Lie group with no compact normal subgroups. Let $K$ be a maximal compact subgroup of $G$. Let $\G$ be a torsion-free uniform lattice in $G$. Then $\G\b G/K$ is a compact locally symmetric space. Let $\f{g}_0$ and $\f{k}_0$ be the Lie algebras of $G$ and $K$ respectively and let $\theta$ be the Cartan involution of $\f{g}_0$, which fixes $\f{k}_0$ pointwise. The cohomology of $\G\b G/K$ with complex coefficients, can be written as a direct sum of relative Lie algebra cohomologies of $(\f{g},K)$ with coefficients in $A_\f{q}$, where $\f{q}$ are $\theta$-stable parabolic subalgebras of $\f{g}_0$ and $A_\f{q}$ are obtained by cohomological induction on the trivial $\f{q}$-module. This is called the \emph{Matsushima decomposition}.
Our aim is to construct concrete cohomology classes of $\G\b G/K$ and determine which of their components, with respect to the Matsushima decomposition, are non-zero. A non-zero component implies that the corresponding irreducible unitary representation occur in $L^2(\G\b G)$.  

The cohomology classes that will be considered are Poincar\'e duals of totally geodesic submanifolds. We call these \emph{geometric classes}. In literature these submanifolds are variously called \emph{geometric cycles, special cycles} or \emph{generalized modular symbols}.   Millson and Raghunathan \cite{mr} proved that if a pair of complementary dimensional totally geodesic submanifolds have all the intersection numbers  positive, then (going to a finite cover if necessary) the corresponding geometric classes have non-zero components other than the one corresponding to the trivial representation. Unfortunately, to our knowledge, there is no other result about non-vanishing of components of a geometric class. But since the number of non-zero components can only be finite, we go the roundabout way of ascertaining the representations for which the corresponding components of a geometric class are zero. For this we restrict to the case of compact Hermitian locally symmetric spaces. Here we exploit the additional Hodge bigrading on the cohomology. If the submanifold is complex analytic then the corresponding geometric class is of $(p,p)$-type. On the other hand, if $(p,q)$ denotes the Hodge types of classes in $H^*(\f{g},K;A_\f{q})$, then $q-p$ is constant. Thus a complex analytic geometric class has no $A_\f{q}$-component for which the above constant in non-zero. This is the first (and elementary) vanishing result on components of a geometric class that we will use. It was already noted and exploited in \cite{monsan}. A non-trivial vanishing result was obtained by Kobayashi and Oda in \cite{koboda}. We generalize this to arrive at the second non-vanishing result (Corollary \ref{main2} in \S\ref{nontrivial}) that we will use:
\begin{theorem}
Assume $G$ is simple. 
Let $G'\subset G$ and $K':=G'\cap K, \G':=G'\cap \G$ be such that $\G'\b G'/K'$ is a compact totally geodesic submanifold of $\G\b G/K$. Let $\f{g}'$ be the complexified Lie algebra of $G'$. If $A_\f{q}$ is discretely decomposable as a $(\f{g}',K')$ module, then the Poincar\'e dual of $\G'\b G'/K'$ has no $A_\f{q}$-component.
\end{theorem}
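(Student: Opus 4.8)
The plan is to translate the statement about Matsushima components into one about a period integral, and then to show that this period integral defines a $(\f{g}',K')$-invariant functional which discrete decomposability forces to vanish. Write $n=\dim_{\r}\f{p}_0$ and $m=\dim_{\r}\f{p}'_0$, so that $M:=\G\b G/K$ has dimension $n$, the cycle $C:=\G'\b G'/K'$ has dimension $m$, and $\eta:=\mathrm{PD}[C]\in H^{n-m}(M;\c)$. Since the Matsushima decomposition is orthogonal and compatible with the Poincar\'e duality pairing, the $A_\f{q}$-component of $\eta$ is non-zero if and only if $\int_M \eta\wedge\alpha\neq 0$ for some harmonic class $\alpha$ in the $A_{\bar{\f{q}}}$-isotypic part of $H^m(M;\c)$, where $A_{\bar{\f{q}}}$ denotes the contragredient of $A_\f{q}$; I note here that $A_\f{q}$ is discretely decomposable over $(\f{g}',K')$ if and only if $A_{\bar{\f{q}}}$ is. By the defining property of the Poincar\'e dual, $\int_M\eta\wedge\alpha=\int_C\iota^*\alpha$ with $\iota\colon C\hookrightarrow M$ the inclusion, so the problem reduces to showing $\int_C\iota^*\alpha=0$ for every such $\alpha$.

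Next I would compute $\int_C\iota^*\alpha$ representation-theoretically. A harmonic representative of $\alpha$ is built from an automorphic realization $A_{\bar{\f{q}}}\hookrightarrow L^2(\G\b G)$ together with a class $\psi\in H^m(\f{g},K;A_{\bar{\f{q}}})=\mathrm{Hom}_K(\wedge^m\f{p},A_{\bar{\f{q}}})$. Restriction along $\iota$ is induced by the inclusion $\f{p}'\hookrightarrow\f{p}$ on the exterior-algebra side and by restriction of automorphic functions from $\G\b G$ to $\G'\b G'$ on the coefficient side. Because $\dim\f{p}'=m$, the form part lands in the one-dimensional space $\wedge^m(\f{p}')^*$, so $\iota^*\alpha$ is a top-degree form on $C$ and its integral is the $L^2(\G'\b G')$-pairing of the restricted coefficient function with the constant function $1$. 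Tracking the $(\f{g}',K')$-equivariance through this construction, the assignment $\psi\mapsto\int_C\iota^*\alpha$ factors through a $(\f{g}',K')$-invariant functional on $A_{\bar{\f{q}}}$, that is, through an element of $\mathrm{Hom}_{(\f{g}',K')}(A_{\bar{\f{q}}},\c)$; equivalently it factors through the top-degree part of $H^m(\f{g}',K';A_{\bar{\f{q}}}|_{(\f{g}',K')})$.

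Finally I would invoke discrete decomposability to kill this functional. When $A_{\bar{\f{q}}}|_{(\f{g}',K')}$ is discretely decomposable it is, as a $(\f{g}',K')$-module, a direct sum $\bigoplus_i\sigma_i$ of irreducibles, whence $\mathrm{Hom}_{(\f{g}',K')}(A_{\bar{\f{q}}},\c)=\prod_i\mathrm{Hom}_{(\f{g}',K')}(\sigma_i,\c)$ and, equivalently, $H^m(\f{g}',K';A_{\bar{\f{q}}})=\bigoplus_i H^m(\f{g}',K';\sigma_i)$ with no continuous-spectrum contribution. By Poincar\'e duality for relative Lie algebra cohomology the top-degree space $H^m(\f{g}',K';\sigma_i)$ is dual to the $(\f{g}',K')$-invariants in $\sigma_i^*$, so it vanishes unless $\sigma_i$ is the trivial module; likewise each $\mathrm{Hom}_{(\f{g}',K')}(\sigma_i,\c)$ vanishes unless $\sigma_i$ is trivial. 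It therefore suffices to rule out the trivial $(\f{g}',K')$-module as a discrete summand of $A_{\bar{\f{q}}}$, which follows from infinitesimal-character considerations: $Z(\f{g})$ acts on $A_{\bar{\f{q}}}$ by the character $\rho$, and a $G'$-fixed vector would force $\rho|_{\f{h}'}$ to be $W'$-conjugate to $\rho_{\f{g}'}$, contrary to the genericity built into the construction.

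The main obstacle is this last step together with the honest bookkeeping of the middle paragraph. Without discrete decomposability the functional $\int_C$ can be non-zero through the continuous spectrum of $A_{\bar{\f{q}}}|_{G'}$ even when no genuine $G'$-fixed vector exists; the entire force of the hypothesis is to eliminate that continuous part and thereby reduce non-vanishing of the period to the occurrence of an honest trivial summand. Making rigorous that the period factors through the \emph{algebraic} space $\mathrm{Hom}_{(\f{g}',K')}(A_{\bar{\f{q}}},\c)$ — so that density of $K'$-finite vectors lets the summand-wise vanishing propagate to the continuous functional — and verifying the infinitesimal-character incompatibility that excludes the trivial summand, is precisely where the extension of the Kobayashi--Oda argument must do its work.
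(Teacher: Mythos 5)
Your first two steps reproduce, in slightly different clothing, the paper's own reduction (its Theorem \ref{main1}): the pairing $\langle\mathcal{P}(Y_\G),\omega\rangle$ becomes a period $\int_{Y_\G}\iota^*\omega$ of a top-degree form on the cycle, and since only the $\mathbf{1}$-component of $C^\infty(\G'\b G')_{K'}$ can contribute to top cohomology, the period factors through an element $\text{pr}_\mathbf{1}\circ i^*\circ\phi$ of $\text{Hom}_{(\f{g}',K')}(A_\f{q},\c)$. (Your detour through the contragredient $A_{\bar{\f{q}}}$ is harmless; the paper uses the Hodge star instead.) So the whole theorem correctly reduces to showing that $\text{Hom}_{(\f{g}',K')}(A_\f{q},\c)=0$, i.e.\ that the trivial module is not a discrete summand of $A_\f{q}|_{(\f{g}',K')}$.

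The gap is in how you rule out that trivial summand. The proposed ``infinitesimal-character considerations'' do not work: the action of $Z(\f{g}')$ on an irreducible constituent of $A_\f{q}|_{(\f{g}',K')}$ is not obtained by restricting the $Z(\f{g})$-character $\chi_\rho$ to a Cartan of $\f{g}'$, and there is no ``genericity'' in $\rho$ to exploit. Already for a holomorphic $A_\f{q}$ of $\mathrm{SU}(2,1)$ restricted to $\mathrm{SU}(1,1)$ the discrete summands carry infinitely many distinct $\f{g}'$-infinitesimal characters, none determined by $\rho|_{\f{h}'}$; so nothing a priori forbids the trivial character from occurring. The paper closes this step by an entirely different mechanism, Moore's ergodicity theorem: because $G$ is simple and $G'$ is non-compact, any $G'$-fixed vector in the unitary completion $\bar{V}_\pi$ is automatically $G$-fixed, contradicting irreducibility once $A_\f{q}\ne\mathbf{1}$ (a hypothesis you also need and omit, since $A_\f{g}=\c$ is trivially discretely decomposable yet $\mathcal{P}(Y_\G)$ certainly has a $\mathbf{1}$-component). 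Note that your argument never uses the simplicity of $G$ or the non-compactness of $G'$, which is exactly the signal that the decisive input is missing; discrete decomposability is used only to upgrade a nonzero element of $\text{Hom}_{(\f{g}',K')}(A_\f{q},\c)$ to an honest $G'$-invariant vector in $\bar{V}_\pi$, to which Moore's theorem then applies.
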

From a computational point of view, this result owes its significance to a simple criterion, given by Kobayashi in \cite{kob94},\cite{kob97}, for discrete decomposability of $A_\f{q}$, when regarded as module over a reductive subalgebra. In fact, using this criterion, Kobayashi and Oshima list in \cite{kobosh} all symmetric pairs $(\f{g}_0,\f{g}'_0)$ and the modules $A_\f{q}$, such that $A_\f{q}$ is discretely decomposable as a $(\f{g}',K')$-module.
Our strategy is to list all the cohomologies with $A_\f{q}$ coefficients that are non-zero in the dimension of a geometric class and eliminate as many as possible based on the vanishing results. In some cases, this method of elimination leaves out just one $A_\f{q}$ coefficient. This allows us to conclude that our geometric class has that particular component. Thus vanishing results allow us to obtain non-vanishing results in certain cases.  Following is our main theorem.
\begin{theorem}\label{main}
Let $G=\emph{SU}(p,q), p\le q$.
Consider the $(\f{g},K)$-modules $A_{\f{q}_\lambda}$ with parameters $\lambda$ and conditions on $p,q$ as given below. \emph{(}Refer to \S5 to see how these parameters are defined.\emph{)}

\vspace{.1cm}

    \noindent\emph{(1)} $\lambda=\epsilon_1-\epsilon_p$ and $5\le p\le q\le 2p-2$.
    
    \vspace{.1cm}
    
    \noindent\emph{(2)} $\lambda=\epsilon_{p+1}-\epsilon_{p+q}$ and  $5\le p\le q, p\ne q-1$.
    
    \vspace{.1cm}
    

\noindent For each of these $A_{\f{q}_\lambda}$, there exists a
torsion-free uniform lattice $\G$ in $G$ and a geometric class in $H^*(\G\b G/K;\c)$ having an $A_{\f{q}_\lambda}$-component. Hence all these $A_{\f{q}_\lambda}$ occur with non-zero multiplicity in $L^2(\Gamma\b G)$.
\end{theorem}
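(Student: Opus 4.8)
The plan is to execute, for each of the two parameter families, the elimination scheme sketched in the introduction. Fix $\lambda$ and hence the $\theta$-stable parabolic $\f{q}_\lambda=\f{l}_\lambda\oplus\f{u}_\lambda$ together with its module $A_{\f{q}_\lambda}$. First I would choose a symmetric subgroup $G'=\text{S}(\text{U}(i,j)\times\text{U}(p-i,q-j))\subset\text{SU}(p,q)$, with $i,j$ adapted to $\lambda$, so that $G'/K'$ is a Hermitian sub-symmetric space that is holomorphically and totally geodesically embedded in $G/K$; its Poincar\'e dual then lives in total degree $2c$ and has Hodge type $(c,c)$, where $c$ is the complex codimension. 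Using Vogan--Zuckerman theory I would record the bottom degree $R(\f{q}_\lambda)=\dim(\f{u}_\lambda\cap\f{p})$ in which $H^*(\f{g},K;A_{\f{q}_\lambda})$ is supported, together with its Hodge bidegree, reading both off from the sign-sequence description of $\f{q}_\lambda$ set up in \S5, and I would fix the blocks $i,j$ so that this bidegree is exactly $(c,c)$ with $2c=R(\f{q}_\lambda)$.

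The next step is the enumeration. I would list every $\theta$-stable parabolic $\f{q}$ of $\f{su}(p,q)$ for which $A_\f{q}$ has nonzero $(\f{g},K)$-cohomology in total degree $2c$ and Hodge type $(c,c)$. The constant-difference property of the Hodge bigrading immediately discards all $\f{q}$ for which the invariant $q'-p'$ is nonzero, leaving only the balanced parabolics---those with $\dim(\f{u}\cap\f{p}^+)=\dim(\f{u}\cap\f{p}^-)$; what survives is a finite list indexed by compositions of $p$ and of $q$ (equivalently by sign patterns), which I would write down explicitly in the degree $2c$ fixed above.

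Now comes the decisive elimination. For the symmetric pair $(\f{g},\f{g}')$ attached to the chosen $G'$, I would apply Kobayashi's discrete-decomposability criterion in the form tabulated by Kobayashi--Oshima to every competitor on the list: each one is discretely decomposable as a $(\f{g}',K')$-module and is therefore killed by the vanishing theorem stated above, while $A_{\f{q}_\lambda}$ itself is \emph{not} discretely decomposable and hence is not obstructed. Checking that the numerical hypotheses---$5\le p\le q\le 2p-2$ in case (1), and $5\le p\le q$ with $p\ne q-1$ in case (2)---are precisely what collapse the list to the single survivor $A_{\f{q}_\lambda}$ is the heart of the argument and the step I expect to be most delicate: for smaller $p$, or for the excluded relations between $p$ and $q$, an extra balanced parabolic would either pass the decomposability test or coincide numerically with $\f{q}_\lambda$, and the elimination would no longer isolate a unique module.

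It remains to convert the elimination into a genuine nonvanishing statement, for which the ambient geometric class must itself be nonzero. Here I would invoke the arithmetic construction of Millson--Raghunathan: realize $\G$ as an arithmetic lattice over a field for which $G'$ is also defined, and, after passing to a finite cover, pair $\G'\b G'/K'$ with a complementary totally geodesic cycle meeting it transversally in intersection points all of one sign, so that the Poincar\'e dual is nonzero in $H^{2c}(\G\b G/K;\c)$. A nonzero class all of whose Matsushima components but one have been shown to vanish must have its remaining $A_{\f{q}_\lambda}$-component nonzero; by the Matsushima decomposition this is exactly the assertion that $A_{\f{q}_\lambda}$ occurs with positive multiplicity in $L^2(\G\b G)$, which finishes the proof.
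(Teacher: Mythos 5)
Your outline follows the paper's own strategy: take $G'$ from the $k=1$ rows of Table \ref{list} ($\text{S}(\text{U}(1)\times\text{U}(p-1,q))$ for case (1), $\text{S}(\text{U}(p,q-1)\times\text{U}(1))$ for case (2)), observe that the Poincar\'e dual of the resulting cycle has Hodge type $(c,c)$ so that only parabolics with $R^+(\f{q})=R^-(\f{q})\le c$ can contribute, and eliminate those that are discretely decomposable over $(\f{g}',K')$. But the step you defer as ``most delicate'' is in fact essentially the whole proof, and you have not done it. One must parametrize an arbitrary competitor by the sign pattern of its defining $\lambda'=\sum a_i\epsilon_i$ relative to a threshold coordinate (the paper's integers $x,y,z,l,m,n$ with $x+y+z=p$, $l+m+n=q$), bound $R^{\pm}$ from below by bilinear expressions in these, and use $q\ge 5$ together with $q\le 2p-2$ (resp.\ $p\ne q-1$) to force the pattern $a_1>a_2=\cdots=a_{p-1}=a_{p+1}=\cdots=a_{p+q}>a_p$, i.e.\ $\lambda=\epsilon_1-\epsilon_p$ up to equivalence. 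All the numerical hypotheses of the theorem enter only through this computation; asserting that the list ``collapses to a single survivor'' without carrying it out proves nothing, and indeed for $q=p+1$ in case (2) the collapse genuinely fails ($|Q\setminus D|=3$).

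There is also a logical gap at the end. You conclude from the Millson--Raghunathan construction only that the Poincar\'e dual is \emph{nonzero}, and then argue that a nonzero class with all but one component eliminated must have that component. But the trivial module $A_{\f{g}}=\mathbf{1}$ is itself a ``balanced'' candidate with $R^{\pm}=0$; Corollary \ref{main2} explicitly excludes it, and the $\mathbf{1}$-component of the Poincar\'e dual of a positive-volume cycle is always nonzero. So two candidates survive your elimination, and a nonzero class could a priori be entirely $G$-invariant. What is needed, and what Theorem \ref{con}(3) actually supplies after passing to a finite cover, is that the Poincar\'e dual is \emph{not $G$-invariant}; only then is the surviving non-trivial component forced to be the $A_{\f{q}_\lambda}$-component.
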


Now we give a section wise description. In \S\ref{md} we give a description of Matsushima's decomposition, the groups $H^*(\f{g},K;A_\f{q})$ and the Hodge bigrading on it when $\f{g}_0$ is associated to a Hermitian symmetric space. In \S\ref{cons} we describe a construction which ensures that given an involution $\sigma$ of $G$, there exists a maximal compact subalgebra $K$ and a uniform torsion-free lattice $\G$ of $G$, such that $\sigma$ induces an involution of $\G\b G/K$ and the Poincar\'e dual of the fixed point submanifold is not $G$-invariant. In \S\ref{nontrivial} we extend Kobayashi and Oda's vanishing result. In \S\ref{proof} we prove Theorem \ref{main}. 

\section{Matsushima decomposition of $H^*(\G\b G/K;\c)$}\label{md}

In this section we wish to describe the de Rham cohomology of $X_\G:=\G\b G/K$ in terms of cohomology of $(\f{g},K)$-modules. Let us first do this for the universal cover $X:=G/K$.  At each point of a manifold, a complex valued differential $m$ form takes an element of the $m^\text{th}$ exterior product of the complexified tangent space and returns a complex number, and it does so in a smooth manner. Since $X$ is a homogeneous manifold the tangent spaces and their exterior products at each point can be identified with those at $eK$ via the action of $G$.
Let $\f{g}_0=\f{k}_0\oplus\f{p}_0$ be the Cartan decomposition with respect to a Cartan involution $\theta$. The tangent space of $X$ at $eK$ can be identified with $\f{p}_0$. Thus a complex valued differential $m$ form on $X$ can be thought of an element in $\text{Hom}_K(\wedge^m\f{p},C^\infty(G))$, where $C^\infty(G)$ is the space of complex valued smooth functions on $G$ equipped with the right translation action of $G$. The $K$ equivariance takes care of the ambiguity arising from the non-uniqueness of elements of $G$ that can be chosen for identifying the complexified tangent space at a point with $\f{p}$.  Since $X$ is a cover of $X_\G$, the differential forms of $X_\G$ are just $\G$-invariant differential forms of $X$. Thus a complex valued differential $m$ form of $X_\G$ can be identified with an element in $\text{Hom}_K(\wedge^m\f{p},C^\infty(\G\b G))$. Since $\wedge^m\f{p}$ is finite dimensional, its image under a $K$-homomorphism must land in the $K$-finite part $C^\infty(\G\b G)_K$ of $C^\infty(\G\b G)$. For any $(\f{g},K)$-module $V$, let $C^*(\f{g},K;V)$ denote the cochain complex of the relative Lie algebra cohomology of $(\f{g},K)$ with coefficients in $V$, where the individual cochain groups are defined as $C^m(\f{g},K;V)=\text{Hom}_K(\wedge^m\f{p},V)$. To summarise the above discussion, we have an isomorphism of chain complexes of the de Rham complex $\Omega^*(X_\G;\c)$ and $C^*(\f{g},K;C^\infty(\G\b G)_K)$. See \cite[Chapter VII, Proposition 2.5]{borel-wallach}.

If $M$ is a compact oriented Riemannian manifold then there is a inner product on  $\Omega^*(M;\c)$ given by $\langle\omega,\eta\rangle=\int_M\omega\wedge*\eta,$ where $*$ is the Hodge star operator. On the other hand any relative Lie algebra cohomology with coefficients in a unitary $(\f{g},K)$-module $V$ has an inner product on it defined as follows. The Killing form on $\f{g}_0$ restricts to an inner product on $\f{p}_0$. The representation on $V$ being unitary, it comes equipped with an inner product. So via the isomorphism  $\text{Hom}_K(\wedge^m\f{p},V)\cong(\wedge^m\f{p}^*\otimes V)^K$ we get an inner product on $\text{Hom}_K(\wedge^*\f{p},V)$. In particular if $V=C^\infty(\G\b G)_K$, then inner product on $V$ comes from the inclusion $C^\infty(\G\b G)_K\subset L^2(\G\b G)$. Thus both the vector spaces $\Omega^*(X_\G;\c)$ and $C^*(\f{g},K;C^\infty(\G\b G)_K)$ are equipped with inner products. The isomorphism of chain complexes between them is also an isometry. 

As in case of de Rham complexes of compact orientable Riemannian manifolds, we may define the Laplace operator $\Delta$ on $C^*(\f{g},K;V)$, for any $(\f{g},K)$-module $V$, as $\Delta=d\partial+\partial d$, where $\partial$ is adjoint of $d$ with respect to the inner product. The action of the Laplace operator is equal to that of the Casimir element $c\in U(\f{g})$. If $V$ is an irreducible $(\f{g},K)$-module then $H^*(\f{g},K;V)\ne 0$ only if the action of Casimir element is trivial. In this case $H^*(\f{g},K;V)=C^*(\f{g},K;V)=\text{Hom}_K(\wedge^*\f{p},V)$. See \cite[Chapter II, Proposition 3.1]{borel-wallach}.

By Hodge theory $H^*(X_\G,\c)=\mathcal{H}^*(X_\G)$, where $\mathcal{H}^*(X_\G)$ is the space of complex valued harmonic forms. On the other hand $H^*(X_\G,\c)=H^*(\f{g},K;$ $C^\infty(\G\b G)_K)$. The $(\f{g},K)$-module $C^\infty(\G\b G)_K$ is the Harish Chandra module of $L^2(\G\b G)$. By \cite[Chapter 1, \S2]{gp}, $L^2(\G\b G)$ is  discretely decomposable as a $G$-module: $L^2(\G\b G)\cong\hat{\bigoplus}_{\pi\in\widehat{G}}m(\pi,\G)H_\pi$, where the multiplicities $m(\pi,\G)$ are finite. From all this information one can deduce the \emph{Matsushima decomposition}:  

$$\mathcal{H}^*(X_\G)\cong\bigoplus_{\pi\in\widehat{G}_0}m(\pi,\G)\text{Hom}_K(\wedge^*\f{p},H_{\pi,K}),$$
where $\widehat{G}_0:=\{\pi\in\widehat{G}:\pi(c)=0\}$, where $c$ is the Casimir element and, by abuse of notation, $\pi$ is also the corresponding action of $U(\f{g})$ on the Harish-Chandra module $H_{\pi,K}$. See \cite[Chapter VII, Corollary 3.3]{borel-wallach}.

\begin{definition}\cite[(2.3.2)]{koboda}
For $\pi\in\hat{G}_0$, we define the \emph{$\pi$-component} of $\mathcal{H}^*(X_\G)\subset\text{Hom}_K(\wedge^*\f{p},$ $C^\infty(\G\b G)_K)$ to be consisting of those homomorphisms whose images are contained in the $\pi$-isotypical component of $C^\infty(\G\b G)_K$. That is, $\omega\in\pi\text{-component}$ if and only if it is a sum of homomorphisms that factor through $H_{\pi,K}$:
\begin{center}
\begin{tikzpicture}
\node at (0,1.5) {$\wedge^*\f{p}$};
\node at (2,1.5) {$H_{\pi,K}$};
\node at (4.5,1.5) {$C^\infty(\G\b G)_K$,};
\draw [->] (.45,1.5)--(1.5,1.5);
\draw [->] (2.5,1.5)--(3.3,1.5);
\node [above] at (1.15,1.5) {$\psi$};
\node [above] at (2.85,1.5) {$\phi$};
\end{tikzpicture}
\end{center}
where $\psi$ is a $K$-map and $\phi$ is a $(\f{g},K)$-map.
\end{definition}



The relative Lie algebra cohomology $H^*(\f{g},K;H_{\pi,K})$ is non-zero if and only if $H_{\pi,K}$ is of the form $A_\f{q}$, where $\f{q}$ is a $\theta$-stable parabolic subalgebra of $\f{g}_0$ and $A_\f{q}$ is the $(\f{g},K)$-module obtained by cohomological induction on the trivial one dimensional $(\f{q},L\cap K)$-module. (The subgroup $L$ is defined below.)  See \cite[Theorem 4.1]{vz}. Let $\f{q}=\f{l}+\f{u}$, where $\f{l}$ is the Levi part and $\f{u}$ is the nilpotent radical. By definition of $\theta$-stable parabolic subalgebra, $\f{l}=\f{q}\cap\bar{\f{q}}$ and hence $\f{l}_0:=\f{l}\cap\f{g}_0$ is a real form of $\f{l}$. Let $L$ be the connected subgroup of $G$ with Lie algebra $\f{l}_0$. By \cite[Theorem 3.3]{vz}, we have
$$H^k(\f{g},K;A_\f{q})=H^{k-R(\f{q})}(\f{l},L\cap K;\c)=\text{Hom}_{L\cap K}(\wedge^{k-R(\f{q})}(\f{l}\cap\f{p}),\c),$$
where $R(\f{q})=\dim(\f{u}\cap\f{p})$.
The value of the shift $R(\f{q})$ is $0$ if and only if $\f{q}=\f{g}$ and in this case $A_\f{g}=\c$, where the action of $(\f{g},K)$ on $\c$ is trivial. Let us denote this trivial one dimensional representation by $(\mathbf{1},\c)$. Then $m(\textbf{1},\G)=1$ for all uniform lattice $\G$. In fact this is the submodule of $C^\infty(\G\b G)_K$ consisting of constant functions. So the corresponding forms are the $G$-invariant ones.

\begin{lemma}\label{top}\emph{\cite[Lemma 2.4]{koboda}}
The only $\pi$-component that contributes to the top cohomology group is the $\mathbf{1}$-component.
\end{lemma}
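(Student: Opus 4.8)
The plan is to use the explicit description of $H^*(\f{g},K;A_\f{q})$ recorded above together with a single dimension count. The top cohomology sits in degree $n:=\dim X_\G=\dim_\c\f{p}$, and by the Matsushima decomposition a $\pi$-component contributes to $H^n(X_\G)$ precisely when $H^n(\f{g},K;H_{\pi,K})\ne 0$. By the Vogan--Zuckerman result cited above this forces $H_{\pi,K}\cong A_\f{q}$ for some $\theta$-stable parabolic $\f{q}=\f{l}+\f{u}$, and then
$$H^n(\f{g},K;A_\f{q})=H^{n-R(\f{q})}(\f{l},L\cap K;\c)=\text{Hom}_{L\cap K}(\wedge^{n-R(\f{q})}(\f{l}\cap\f{p}),\c).$$
Since $\wedge^j(\f{l}\cap\f{p})=0$ for $j>\dim(\f{l}\cap\f{p})$, this group vanishes as soon as $n-R(\f{q})>\dim(\f{l}\cap\f{p})$. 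So it suffices to compare $n-R(\f{q})$ with $\dim(\f{l}\cap\f{p})$.

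The key step is the dimension identity $n=\dim(\f{l}\cap\f{p})+2R(\f{q})$. To obtain it I would use the triangular decomposition $\f{g}=\bar{\f{u}}\oplus\f{l}\oplus\f{u}$. All three summands are $\theta$-stable: writing $\f{q}$ as the sum of the non-negative eigenspaces of $\text{ad}(x)$ for an elliptic element $x\in i\f{k}_0$, the involution $\theta$ fixes $x$ and hence preserves each eigenspace. Intersecting the $\theta$-stable decomposition with $\f{p}$ gives
$$\f{p}=(\bar{\f{u}}\cap\f{p})\oplus(\f{l}\cap\f{p})\oplus(\f{u}\cap\f{p}),$$
so that $n=\dim(\f{l}\cap\f{p})+\dim(\f{u}\cap\f{p})+\dim(\bar{\f{u}}\cap\f{p})$. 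The crucial observation is that the conjugate-linear complex conjugation $\sigma$ of $\f{g}$ with respect to $\f{g}_0$ sends $x$ to $-x$, hence sends $\f{u}$ to the opposite nilradical $\bar{\f{u}}$, while preserving $\f{p}$ because $\f{p}=\f{p}_0\otimes_\r\c$. Thus $\sigma$ restricts to an $\r$-linear isomorphism $\f{u}\cap\f{p}\xrightarrow{\sim}\bar{\f{u}}\cap\f{p}$ carrying a $\c$-basis to a $\c$-basis, whence $\dim(\f{u}\cap\f{p})=\dim(\bar{\f{u}}\cap\f{p})=R(\f{q})$ and the identity follows.

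Combining the two displays, $n-R(\f{q})=\dim(\f{l}\cap\f{p})+R(\f{q})$, which strictly exceeds $\dim(\f{l}\cap\f{p})$ unless $R(\f{q})=0$. By the remark preceding the lemma, $R(\f{q})=0$ occurs exactly when $\f{q}=\f{g}$, in which case $A_\f{q}=A_\f{g}=\c$ is the trivial module $(\mathbf{1},\c)$. Therefore the only $\pi$-component contributing to the top cohomology group is the $\mathbf{1}$-component. I expect the only point needing genuine care to be the verification that $\sigma$ identifies $\f{u}\cap\f{p}$ with $\bar{\f{u}}\cap\f{p}$ and preserves complex dimension; once that symmetry between $\f{u}$ and $\bar{\f{u}}$ along $\f{p}$ is in hand, the remainder is a direct application of the formulas already established.
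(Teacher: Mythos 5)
Your argument is correct, but note that the paper does not prove this lemma at all --- it is quoted directly from Kobayashi--Oda \cite[Lemma 2.4]{koboda}, where the proof is the short topological one: $H^{\mathrm{top}}(X_\G;\c)$ is one-dimensional because $X_\G$ is a compact connected oriented manifold, and it is spanned by the (harmonic, $G$-invariant) volume form, which lies in the $\mathbf{1}$-component; since the components are direct summands, every other component must vanish in top degree. Your route is genuinely different and purely representation-theoretic: you invoke Vogan--Zuckerman to reduce to $A_\f{q}$-modules, and then kill $H^n(\f{g},K;A_\f{q})$ for $\f{q}\ne\f{g}$ by the dimension count $n=\dim(\f{l}\cap\f{p})+2R(\f{q})$, so that $n-R(\f{q})>\dim(\f{l}\cap\f{p})$ whenever $R(\f{q})>0$. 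All the ingredients check out: the triangular decomposition $\f{g}=\bar{\f{u}}\oplus\f{l}\oplus\f{u}$ is $\theta$-stable since $\f{q}$ is defined by an element of $i\f{k}_0$, the conjugation with respect to $\f{g}_0$ interchanges $\f{u}$ and $\bar{\f{u}}$ while preserving $\f{p}$ and complex dimensions, and $R(\f{q})=0$ iff $\f{q}=\f{g}$ is stated in the paper (and holds because $G$ has no compact factors). What the standard proof buys is brevity and independence from \cite{vz}; what yours buys is an explicit identification of \emph{why} each nontrivial $A_\f{q}$ fails to reach top degree, namely the degree shift $R(\f{q})$, which fits naturally with the rest of \S\ref{md}. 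Both are acceptable; yours is a valid self-contained replacement for the citation.
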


Now suppose $X$ is Hermitian symmetric. Then $X_\G$ is a K\"ahler manifold and hence its cohomology has a Hodge bigrading. In this case, relative Lie algebra cohomology of $(\f{g},K)$ with coefficient in any module also has a Hodge bigrading. See \cite[Chapter II, Corollary 4.5]{borel-wallach}. Taking this into account, Matsushima's decompostion can be written as
$$H^{p,q}(X_\G)\cong\bigoplus_{\pi\in\hat{G}_0}m(\pi,\G)H^{p,q}(\f{g},K;H_{\pi,K}).$$
The complex structure $J$ on $\f{p}_0$ induces one on $\f{l}_0\cap\f{p}_0$. Hence the groups $H^*(\f{l},L\cap K;\c)$ also have Hodge decomposition. By \cite[Proposition 6.19]{vz}, the Hodge bigradings of $H^*(\f{g},K;A_\f{q})$ and $H^*(\f{l},L\cap K;\c)$  are related as 
$$H^{p,q}(\f{g},K;A_\f{q})\cong H^{p-R^+(\f{q}),q-R^-(\f{q})}(\f{l},L\cap K;\c),$$
where $R^+(\f{q})=\dim(\f{u}\cap\f{p}^+)$ and $R^-(\f{q})=\dim(\f{u}\cap\f{p}^-)$. Here $\f{p}^+$ and $\f{p}^-$ are the $+i$ and $-i$ eigenspaces of $J$ in $\f{p}$. In this case $L_u/(L_u\cap K)$ is a compact Hermitian symmetric space and hence all its cohomology classes are of Hodge type $(p,p)$. Thus $H^{p,q}(\f{g},K;A_\f{q})\ne 0$ if and only if $p-q=R^+(\f{q})-R^-(\f{q})$. We record the particular case of $p=q$ as a lemma.

\begin{lemma}\label{hol}
The vector space $H^{p,p}(\f{g},K;A_\f{q})\ne 0$ only if $A_\f{q}$ satisfies $R^+(\f{q})=R^-(\f{q})$.
\end{lemma}


\section{Construction of geometric classes}\label{cons}

The construction of non-$G$-invariant geometric class in $H^*(\G\b G/K;\c )$, for some torsion-free uniform lattice $\G$, has reduced to fixing an involution of $G$, thanks to the results of several people. The first crucial step in this direction was taken by Millson and Raghunathan in \cite{mr}. They show that if one can  construct pairs of complementary dimensional totally geodesic submanifolds of $\G\b G/K$ such that they intersect in a finite set and the intersection number at each point of this finite set is positive, then going to a finite cover if necessary, the Poincar\'e duals of these submanifolds are not $G$-invariant. See \cite[Theorem 2.1]{mr}. To construct complementary dimensional submanifolds they start with an involution $\sigma$ of $G$ which commutes with a Cartan involution $\theta$ and assume that there exists a uniform torsion-free arithmetic lattice which is invariant under both $\sigma$ and $\theta$. That, given a $\sigma$, such a Cartan involution and a cocompact lattice will always exist was later proved by Raghunathan in \cite{msr} using an earlier construction of Borel \cite{borel}. Since $\sigma\theta=\theta\sigma$, the fixed point subgroup $K$ of $\theta$ in $G$ remains invariant under $\sigma$. Let $A(\sigma)$ denote the fixed point subset of a set $A$ under the action of $\sigma$.  Then $\G(\sigma)\b G(\sigma)/K(\sigma)$ and $\G(\theta\sigma)\b G(\theta\sigma)/K(\theta\sigma)$ are their candidates for submanifolds whose intersection numbers are all positive.  Note that $ G(\sigma)/K(\sigma)$ and $G(\theta\sigma)/K(\theta\sigma)$ are complementary dimensional and intersect at exactly one point. Then a criterion, using the first Galois cohomology of algebraic groups, for $\G(\sigma)\b G(\sigma)/K(\sigma)$ and $\G(\theta\sigma)\b G(\theta\sigma)/K(\theta\sigma)$ to intersect in a finite set with all positive intersection numbers, is given. Using  this criterion they obtain non-vanishing results in case of $G=\text{Sp}(p,q),\text{SU}(p,q)$ and $\text{SO}(p,q)$. Rohlfs and Schwermer in \cite{rs} remove the necessity of this criterion by showing that, under a mild orientability condition, the intersection numbers will always be positive if we go to a finite cover. Their result is proved in a greater generality, but we will not need it here. We summarize this discussion in the form of a theorem.



\begin{theorem}\label{con}~

\vspace{.1cm}

\noindent\emph{(1) (Borel \cite{borel}, Raghunathan \cite{msr})}
Let $G$ be a connected linear semisimple Lie group. Let $\sigma$ be any involution of $G$. Then there exists a global Cartan involution $\theta$ of $G$ such that $\sigma\theta=\theta\sigma$ and a cocompact arithmetic lattice $\Lambda$ in $G$ which is invariant under $\sigma$ and $\theta$.

\vspace{.1cm}

\noindent\emph{(2) (Rohlfs and Schwermer \cite{rs})} With $\sigma,\theta$ as above and $G(\theta)=K$, $\sigma$ induces isometries of $G/K$ and $\G\b G/K$ which we again denote by $\sigma$. Under an orientability condition, which is always satisfied if $G/K$ is Hermitian and $\sigma$ is holomorphic, there exists a finite index torsion-free subgroup $\G$ of $\Lambda$, which  is again invariant under $\sigma$ and $\theta$, such that $\G(\sigma)\b G(\sigma)/K(\sigma)$ and $\G(\theta\sigma)\b G(\theta\sigma)/K(\theta\sigma)$ have all their intersection numbers positive.

\vspace{.1cm}

\noindent\emph{(3) (Millson and Raghunathan \cite{mr})} Replacing $\G$ by a finite index subgroup if necessary, the Poincar\'e duals of $\G(\sigma)\b G(\sigma)/K(\sigma)$ and $\G(\theta\sigma)\b G(\theta\sigma)/K(\theta\sigma)$ are not $G$-invariant.

\end{theorem}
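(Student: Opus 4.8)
The plan is to assemble the three cited results into a single statement by verifying that the hypotheses propagate correctly from one to the next; the only genuine work on our side is checking that the successive passages to finite-index subgroups can be carried out compatibly, preserving invariance under both $\sigma$ and $\theta$ as well as torsion-freeness.

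For part (1) I would argue in two stages. First, given the involution $\sigma$, I would produce a commuting Cartan involution by observing that the collection of Cartan involutions of $\f{g}_0$ forms a Riemannian symmetric space of non-positive curvature on which the finite group $\langle\sigma\rangle$ acts by isometries; the Cartan fixed point theorem then yields a fixed Cartan involution $\theta$, and being fixed is exactly the condition $\sigma\theta=\theta\sigma$. Second, for the lattice I would place $G$ into the real points of a semisimple algebraic group $\mathbf{G}$ carrying a $\q$-structure for which Borel's construction \cite{borel} yields a cocompact arithmetic subgroup (this uses an anisotropic form, possibly after restriction of scalars), and arrange the $\q$-structure so that both $\sigma$ and $\theta$ are defined over $\q$. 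Raghunathan's refinement \cite{msr} is precisely the assertion that this simultaneous compatibility can be achieved; the integral points in a faithful $\q$-rational representation then give a cocompact $\Lambda$ invariant under $\sigma$ and $\theta$.

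For part (2), the standing commutation $\sigma\theta=\theta\sigma$ makes $\sigma$ preserve the Cartan decomposition, and on $\f{p}_0$ one obtains the orthogonal splitting $\f{p}_0=\f{p}_0(\sigma)\oplus\f{p}_0(\theta\sigma)$ into the $+1$-eigenspace of $\sigma$ and the $+1$-eigenspace of $\theta\sigma$ (equivalently the $-1$-eigenspace of $\sigma$, since $\theta=-1$ on $\f{p}_0$). These are the tangent spaces at $eK$ of the totally geodesic symmetric subspaces $G(\sigma)/K(\sigma)$ and $G(\theta\sigma)/K(\theta\sigma)$, which are therefore of complementary dimension and meet transversally at the single point $eK$; their images in $\Lambda\b G/K$ are compact totally geodesic cycles meeting in a finite set. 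I would then invoke the Lefschetz-number analysis of Rohlfs and Schwermer \cite{rs}: under their orientability condition, after replacing $\Lambda$ by a suitable finite-index $\G$ the two cycles can be coherently oriented so that every local intersection number equals $+1$. I would record that when $G/K$ is Hermitian and $\sigma$ is holomorphic the two submanifolds are complex analytic, hence carry canonical orientations, so that the orientability hypothesis holds automatically.

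For part (3), positivity of all local intersection numbers forces the total intersection number to be a strictly positive integer, so the Poincar\'e duals are nonzero and pair nontrivially. Millson and Raghunathan \cite[Theorem 2.1]{mr} then show that, after passing to one further finite cover, the Poincar\'e dual cannot be $G$-invariant: a $G$-invariant class is represented by an invariant form coming from the compact dual and hence lies in a rigidly determined finite-dimensional subspace, whereas the intersection-theoretic contribution of the geometric cycle scales with the degree of the cover, so the cycle class must leave that subspace. The main obstacle in the write-up is the bookkeeping of the three separate passages to finite-index subgroups --- in part (1) for cocompactness, in (2) for positivity, in (3) for non-invariance --- while retaining joint $\sigma$- and $\theta$-invariance and torsion-freeness. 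I would resolve this by working throughout with congruence (or characteristic) subgroups, which are normal and are preserved by any automorphism respecting the chosen arithmetic structure, so that invariance under $\sigma$ and $\theta$ is carried through each refinement automatically.
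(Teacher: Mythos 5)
Your proposal is correct and follows essentially the same route as the paper, which offers no independent proof but simply assembles the cited results of Borel and Raghunathan, Rohlfs--Schwermer, and Millson--Raghunathan in the discussion preceding the theorem. The extra detail you supply (the Cartan fixed point argument for a commuting $\theta$, the eigenspace splitting $\f{p}_0=\f{p}_0(\sigma)\oplus\f{p}_0(\theta\sigma)$ giving complementary dimensions and transversal intersection at $eK$, and the bookkeeping of finite-index passages) is consistent with those sources and with the paper's own summary.
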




As indicated before we will concentrate on the case where $G/K$ is Hermitian. Assuming $G$ is simple, this happens if and only if $K$ has a non-discrete centre. In fact in this case $Z(K)\cong S^1$ and the complex structure is given by $\text{Ad}(j)|_{\f{p}_0}$, where $j\in Z(K)$ such that order of $\text{Ad}(j)$ is $4$. See \cite[Chapter VII, \S6]{hel}. Thus an involution $\sigma$ of $G$ is holomorphic if and only if $\sigma$ point-wise fixes $Z(K)$. In this case, both $\G(\sigma)\b G(\sigma)/K(\sigma)$ and $\G(\theta\sigma)\b G(\theta\sigma)/K(\theta\sigma)$ are complex analytic submanifolds. Hence their Poincar\'e duals are of $(p,p)$-type. This observation along with Lemma \ref{hol} yields the following vanishing result. 

\begin{lemma}\label{comana}
Let $G/K$ be a Hermitian symmetric space and $\sigma$ be an involution of $G$ that keeps $Z(K)$ fixed (pointwise). Let $\omega$ and $\omega'$ be the pair of geometric classes associated to $\sigma$, as explained in Theorem \ref{con}. If $A_\f{q}$ satisfies $R^+(\f{q})\ne R^-(\f{q})$, then $\omega$ and $\omega'$ have no $A_\f{q}$-component.  
\end{lemma}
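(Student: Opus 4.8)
The plan is to deduce the statement directly from Lemma \ref{hol}, once one knows that $\omega$ and $\omega'$ are of pure Hodge type $(p,p)$ and that this purity passes to each summand of the Matsushima decomposition.

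First I would record why the two submanifolds are complex analytic. The complex structure on $\f{p}_0$ is $\text{Ad}(j)|_{\f{p}_0}$ for some $j\in Z(K)$, and by hypothesis $\sigma$ fixes $Z(K)$ pointwise; hence the isometry induced by $\sigma$ on $G/K$ (and on $X_\G$) commutes with $J$ and is holomorphic. Since the fixed-point set of a holomorphic isometric involution of a K\"ahler manifold is a totally geodesic complex analytic submanifold, both $\G(\sigma)\b G(\sigma)/K(\sigma)$ and $\G(\theta\sigma)\b G(\theta\sigma)/K(\theta\sigma)$ are complex analytic, as already observed before the statement. By the standard Hodge-theoretic fact that the Poincar\'e dual of a complex analytic submanifold of complex codimension $p$ in a compact K\"ahler manifold is a class of pure type $(p,p)$ (which follows from $\int_Z\alpha=0$ unless $\alpha$ has complementary type), we get $\omega,\omega'\in H^{p,p}(X_\G)$ for the appropriate $p$.

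The key step is then to transfer this purity through the decomposition. I would use the Hodge-refined Matsushima decomposition $H^{p,q}(X_\G)\cong\bigoplus_{\pi}m(\pi,\G)H^{p,q}(\f{g},K;H_{\pi,K})$ from \S\ref{md}: the projection onto the $\pi$-isotypic part commutes with the Hodge bigrading, because the bigrading is induced by $J=\text{Ad}(j)$ acting on the exterior-algebra factor $\wedge^*\f{p}$, while the isotypic decomposition is taken with respect to the right $G$-action on the function factor $C^\infty(\G\b G)_K$, and these two act on independent tensor factors and hence commute. Therefore a class of pure type $(p,p)$ has each of its $\pi$-components again of type $(p,p)$. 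Taking the $\pi$ with $H_{\pi,K}=A_\f{q}$, the $A_\f{q}$-component of $\omega$ lies in $H^{p,p}(\f{g},K;A_\f{q})$, which is $0$ whenever $R^+(\f{q})\ne R^-(\f{q})$ by Lemma \ref{hol}; so that component vanishes. The identical argument applies to $\omega'$.

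The only point needing care is the compatibility asserted in the key step---that the isotypic projection of a $(p,p)$-class stays of type $(p,p)$---and this is exactly what the Hodge-graded form of Matsushima's decomposition encodes. Once that is granted there is no analytic difficulty, which is consistent with the text's description of this as an elementary vanishing result.
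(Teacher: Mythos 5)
Your proposal is correct and follows exactly the paper's route: the paper likewise observes that $\sigma$ fixing $Z(K)$ pointwise makes the fixed-point submanifolds complex analytic, hence their Poincar\'e duals are of type $(p,p)$, and then invokes the Hodge-refined Matsushima decomposition together with Lemma \ref{hol}. Your write-up merely makes explicit the compatibility of the isotypic projection with the Hodge bigrading, which the paper leaves implicit.
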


\begin{remark}
Even this simple vanishing result yields non-vanishing ones in quite a few cases. See \cite[Theorem 1.1]{monsan}.
\end{remark}

\section{A non-trivial vanishing result}\label{nontrivial}

Now we come to the most important vanishing result that we will use. Let $\sigma$ be an involutive automorphism of $G$ which commutes with a Cartan involution $\theta$. Since we will be dealing with only one involution here, we change the notation, for fixed point subset of a set $A$ under $\sigma$, from $A(\sigma)$ to $A'$.
Let $\G$ be a $\sigma$-invariant torsion-free uniform lattice in $G$ such that  $\G'$ is a lattice in $G'$. (We will always assume that $G'$ is non-compact.) Then $Y_\G:=\G'\b G'/K'$ is a totally geodesic submanifold of $X_\G:=\G\b G/K$. Let $\iota:Y_\G\hookrightarrow X_\G$ be the inclusion map.
Let $\mathcal{P}(Y_\G)$ denote the harmonic form that represents the Poincar\'e dual of $Y_\G$. Given an irreducible unitary representation $(\pi,V_\pi)$ of $(\f{g},K)$, on which the Casimir operator acts trivially,  Kobayashi and Oda in \cite[Theorem 2.8]{koboda} gave a criterion for $\mathcal{P}(Y_\G)$ to not have a $\pi$-component. One of the conditions is \emph{discrete decomposability} which we now define.

\begin{definition}
Let $V$ be a unitary $(\f{g},K)$-module. We say that $V$ is \emph{discretely decomposable} if it can be written as a direct sum of irreducible $(\f{g},K)$-modules.
\end{definition}

\begin{remark}
In \cite[\S1]{kob97}, Kobayashi gives a more general definition of discrete decomposability of any $(\f{g},K)$-module and proves that his definition is equivalent to the one above in the unitary case. We work with the above definition since it is more intuitive.
\end{remark}

We update Kobayashi and Oda's result by removing one of the conditions and present it as two separate statements in the form of a Theorem and a Corollary. This is done to bring out the key point of their result. The proof follows their approach.

\begin{theorem}\label{main1}
With notations as above, if $\emph{Hom}_{(\f{g}',K')}(V_\pi,\c)=0$, where the action of $(\f{g}',K')$ on $\c$ is trivial, then $\mathcal{P}(Y_\G)$ does not have a $\pi$-component.
\end{theorem}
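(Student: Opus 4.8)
The plan is to show that the $\pi$-component of $\mathcal{P}(Y_\G)$ is Poincar\'e-orthogonal to the entire block that can detect it, so that nondegeneracy of the pairing on each isotypic block forces that component to vanish. Concretely, consider the bilinear pairing $H^c(X_\G;\c)\times H^d(X_\G;\c)\to\c$, $(\alpha,\beta)\mapsto\int_{X_\G}\alpha\wedge\beta$, where $c=\dim X_\G-\dim Y_\G$ and $d=\dim Y_\G$. Under the Matsushima decomposition this pairing is block diagonal: the bilinear form on $C^\infty(\G\b G)_K$ induced by integration over $\G\b G$ pairs the $\pi$-isotypic part nontrivially only with the $\bar\pi$-isotypic part, and since $m(\pi,\G)=m(\bar\pi,\G)$ its restriction to the matching pair of blocks is nondegenerate. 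Hence $\mathcal{P}(Y_\G)$ has no $\pi$-component if and only if $\int_{X_\G}\mathcal{P}(Y_\G)\wedge\beta=0$ for every $\beta$ lying in the $\bar\pi$-block of $H^d(X_\G;\c)$.

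By the defining property of the Poincar\'e dual, $\int_{X_\G}\mathcal{P}(Y_\G)\wedge\beta=\pm\int_{Y_\G}\iota^*\beta$, so it suffices to prove $\int_{Y_\G}\iota^*\beta=0$ for such $\beta$. Belonging to the $\bar\pi$-block means $\beta$ factors as $\wedge^d\f{p}\xrightarrow{\psi}V_{\bar\pi}\xrightarrow{\phi}C^\infty(\G\b G)_K$ with $\psi$ a $K$-map and $\phi$ a $(\f{g},K)$-map. Restriction along $\iota$ replaces the domain $\wedge^d\f{p}$ by $\wedge^d\f{p}'$ via $\f{p}'\hookrightarrow\f{p}$ and postcomposes $\phi$ with the function-restriction map $r\colon C^\infty(\G\b G)_K\to C^\infty(\G'\b G')$; in particular $r\circ\phi$ is a $(\f{g}',K')$-homomorphism out of $V_{\bar\pi}$.

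The heart of the argument, and the step I expect to carry the real content, is to evaluate $\int_{Y_\G}\iota^*\beta$ representation-theoretically. Since $\wedge^d\f{p}'_0$ is one-dimensional, fix a generator $e$ dual to the Riemannian volume of $Y_\G$; then integrating the top-degree form $\iota^*\beta$ over the compact manifold $Y_\G$ equals, up to a fixed nonzero constant, $\int_{\G'\b G'}(r\circ\phi\circ\psi)(e)\,dg$, i.e.\ the integration pairing of $(r\circ\phi\circ\psi)(e)\in C^\infty(\G'\b G')$ against the constant function $\mathbf 1$. This is precisely the mechanism behind Lemma \ref{top}: integration over $Y_\G$ detects only the trivial-representation part. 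Now the functional $v\mapsto\int_{\G'\b G'}(r\circ\phi)(v)\,dg$ on $V_{\bar\pi}$ annihilates $\f{g}'_0\cdot V_{\bar\pi}$, because the $\f{g}'_0$-action on $L^2(\G'\b G')$ is by skew-symmetric operators and the constant function is $G'$-invariant, and it is $K'$-invariant for the same reason; hence it defines an element of $\operatorname{Hom}_{(\f{g}',K')}(V_{\bar\pi},\c)$. Since this Hom-space vanishes, the functional is zero, so $\int_{Y_\G}\iota^*\beta=0$ and the $\pi$-component of $\mathcal{P}(Y_\G)$ vanishes.

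The main obstacle is this last step: making precise that integration of the restricted top-form factors as a pairing with the constant function, and verifying that the induced functional on $V_{\bar\pi}$ is genuinely $(\f{g}',K')$-equivariant, which is exactly where unitarity of the regular representation and $G'$-invariance of constants are used. One further bookkeeping point is the passage between $\pi$ and $\bar\pi$: the hypothesis is stated for $V_\pi$, so I would record the elementary equivalence $\operatorname{Hom}_{(\f{g}',K')}(V_\pi,\c)=0\iff\operatorname{Hom}_{(\f{g}',K')}(V_{\bar\pi},\c)=0$, coming from $V_{\bar\pi}$ being the contragredient of $V_\pi$, so that the vanishing hypothesis indeed applies to the block that pairs with $\pi$.
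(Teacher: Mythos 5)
Your proof is correct and follows essentially the same route as the paper's: reduce via the Poincar\'e pairing to showing $\int_{Y_\G}\iota^*\beta=0$ for every class $\beta$ in the relevant isotypic block, observe that this integral is the value of the composite of $\phi$ with restriction to $\G'\b G'$ followed by integration (i.e.\ projection onto the constants), and note that this composite is a $(\f{g}',K')$-map into the trivial module and hence lies in the Hom-space assumed to vanish. The only cosmetic differences are that the paper works with the Hermitian inner product and the $\pi$-component directly, arguing by contradiction via its Lemma \ref{top}, whereas you use the bilinear cup-product pairing and therefore carry out the harmless $\pi\leftrightarrow\bar\pi$ bookkeeping explicitly.
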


\begin{cor}\label{main2}
Let $G$ be simple. If $\mathbf{1}\ne V_\pi$ is discretely decomposable as a $(\f{g}',K')$-module, then $\mathcal{P}(Y_\G)$ does not have a $\pi$-component.
\end{cor}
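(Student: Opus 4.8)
\textbf{Proof proposal for Corollary \ref{main2}.}

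The plan is to deduce the Corollary directly from Theorem \ref{main1} by showing that, under the stated hypotheses, the condition $\mathrm{Hom}_{(\f{g}',K')}(V_\pi,\c)=0$ is automatically satisfied. So the entire task reduces to this: assuming $G$ simple, $V_\pi\ne\mathbf{1}$, and $V_\pi$ discretely decomposable as a $(\f{g}',K')$-module, I must rule out the existence of a non-zero $(\f{g}',K')$-map $V_\pi\to\c$. First I would use discrete decomposability to write $V_\pi\cong\hat{\bigoplus}_i W_i$ as a direct sum of irreducible $(\f{g}',K')$-modules. A non-zero homomorphism to the trivial module $\c$ must then be non-zero on at least one summand $W_i$; since each $W_i$ is irreducible and $\c$ is irreducible, by Schur's lemma such a map is either zero or an isomorphism. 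Hence a non-zero element of $\mathrm{Hom}_{(\f{g}',K')}(V_\pi,\c)$ exists if and only if the trivial $(\f{g}',K')$-module $\mathbf{1}$ appears as one of the irreducible constituents $W_i$.

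The crux is therefore to show that the trivial representation of $(\f{g}',K')$ cannot occur as a direct summand of $V_\pi$ unless $V_\pi$ is itself trivial. This is where simplicity of $G$ enters. I would argue by contradiction: suppose $\mathbf{1}$ occurs as a $(\f{g}',K')$-submodule, i.e. there is a non-zero vector $v\in V_\pi$ annihilated by $\f{g}'$ and fixed by $K'$. The idea is to study the $U(\f{g})$-submodule generated by the $\f{g}'$-invariant vectors and show, using that $\f{g}'$ sits inside the simple Lie algebra $\f{g}$ in a suitably non-degenerate way, that such a vector would force $V_\pi$ to carry a trivial sub- or quotient-representation of the full $(\f{g},K)$; irreducibility of $V_\pi$ then forces $V_\pi\cong\mathbf{1}$, contradicting the hypothesis $V_\pi\ne\mathbf{1}$. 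Concretely, I expect to exploit the fact that $\f{g}'=(\f{g})^\sigma$ is the fixed-point subalgebra of an involution, so that $\f{g}=\f{g}'\oplus\f{s}$ with $\f{s}$ the $(-1)$-eigenspace and $[\f{g}',\f{s}]\subset\f{s}$, and to analyse how $\f{s}$ acts on the space of $\f{g}'$-invariants.

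The main obstacle is precisely this last step: ensuring that the presence of a trivial $(\f{g}',K')$-constituent propagates up to a trivial $(\f{g},K)$-constituent. Schur's lemma and the decomposition into irreducibles are routine, but controlling the $\f{g}$-action generated from a single $\f{g}'$-fixed vector requires the global hypothesis that $G$ is simple (so that $\f{g}$ has no proper ideals through which $V_\pi$ could factor non-trivially) and some structural input about the pair $(\f{g},\f{g}')$. I would look to the representation-theoretic consequences of discrete decomposability developed by Kobayashi in \cite{kob97}, together with the fact that for a symmetric pair the branching is compatible with the $\f{g}$-action, to close this gap. If a direct argument proves delicate, an alternative is to invoke that $\mathrm{Hom}_{(\f{g}',K')}(V_\pi,\c)\ne 0$ together with discrete decomposability would realise $\mathbf{1}$ discretely in the restriction, and then apply the classification of such pairs to see that this is incompatible with $V_\pi\ne\mathbf{1}$ when $G$ is simple.
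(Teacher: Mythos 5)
Your reduction to Theorem \ref{main1} is exactly the paper's strategy, and the Schur's-lemma step identifying $\mathrm{Hom}_{(\f{g}',K')}(V_\pi,\c)\ne 0$ with the occurrence of $\mathbf{1}$ as a direct summand of $V_\pi$ restricted to $(\f{g}',K')$ is fine; this is indeed where discrete decomposability is used (for a direct sum of irreducibles, a nonzero quotient map onto $\c$ is the same thing as a trivial summand, i.e.\ $\dim\mathrm{Hom}_{\f{g}',K'}(V_\pi,\c)=m_\pi(\mathbf{1})$). But the crux --- that a trivial $(\f{g}',K')$-summand forces $V_\pi$ itself to be trivial --- is left open in your write-up, and the algebraic route you sketch is not the one that works. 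The paper closes this step with Moore's theorem \cite{moore}: if $G'$ is a \emph{non-compact} subgroup of a simple Lie group $G$, then in any unitary representation of $G$ a $G'$-fixed vector is automatically $G$-fixed. To apply it one first passes from the Harish--Chandra module $V_\pi$ to its unitary completion $\bar V_\pi$, using Kobayashi's result that the algebraic branching globalizes to a Hilbert-space decomposition $\bar V_\pi\cong\hat{\bigoplus}_\tau m_\pi(\tau)\bar V_\tau$ \cite[Proposition 1.6]{kob97}; the trivial summand then yields a $G'$-fixed vector in $\bar V_\pi$, Moore's theorem upgrades it to a $G$-fixed vector, and irreducibility together with $V_\pi\ne\mathbf{1}$ gives the contradiction.

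Two points in your sketch would not survive scrutiny. First, the decomposition $\f{g}=\f{g}'\oplus\f{s}$ and an analysis of how $\f{s}$ acts on $\f{g}'$-invariants cannot by itself promote a $\f{g}'$-fixed vector to a $\f{g}$-fixed one: the statement is false for compact $G'$ (restriction to a compact subgroup of any infinite-dimensional unitary representation typically contains trivial constituents), so no purely algebraic argument from the symmetric-pair structure can suffice. The essential inputs are unitarity of $\pi$ and non-compactness of $G'$ (an assumption the paper makes throughout \S\ref{nontrivial}), and the correct tool is the analytic Mautner/Moore phenomenon, not a $U(\f{g})$-module computation. Second, your fallback of invoking ``the classification of such pairs'' is both circular for the intended application (the corollary is what makes the classification usable) and insufficient, since the statement is meant to hold for every irreducible unitary $V_\pi\ne\mathbf{1}$ and every non-compact $G'$, not only those appearing in Kobayashi--Oshima's tables.
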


\begin{proof}\textit{of Theorem.} 
Let $\langle~,~\rangle$ be the inner product on $\mathcal{H}^*(X_\G)$. Since the components are all mutually orthogonal, it is enough to show that $\langle\mathcal{P}(Y_\G),\omega\rangle=0$, for all $\omega\in\pi$-component. By definition of Poincar\'e dual, $\langle\mathcal{P}(Y_\G),\omega\rangle=\int_{X_\G}\mathcal{P}(Y_\G)\wedge*\omega=\int_{Y_\G}\iota^*(*\omega)$, where $\iota^*$ is the pull back induced by the inclusion $\iota$. Since the Hodge $*$ operator takes the $\pi$-component to itself, it is enough to prove that $\int_{Y_\G}\iota^*(\omega)=0$ for all $\omega\in\pi$-component.




Recall from \S\ref{md}, $\Omega^*(X_\G;\c)\cong\text{Hom}_K(\wedge^*\f{p},C^\infty(\G\b G)_K)$ and similarly for $\Omega^*(Y_\G;\c)$. Let us understand the induced map
$\iota^*:\text{Hom}_K(\wedge^*\f{p},C^\infty(\G\b G)_K)\to\text{Hom}_{K'}(\wedge^*\f{p}',C^\infty(\G'\b G')_{K'})$. Let $\omega:\wedge^*\f{p}\to C^\infty(\G\b G)_K$ be a $K$-equivariant homomorphism. Then its image under $\iota^*$ is given by the lower horizontal map in the figure below which makes the diagram commutes. 
\begin{center}
\begin{tikzpicture}[node distance=3cm,auto]
\node (A) {$\wedge^*\f{p}$};
\node (B) [right of=A] {$C^\infty(\G\b G)_K$};
\node (C) [below=1cm of A] {$\wedge^*\f{p}'$};
\node (D) [below=1cm of B] {$C^\infty(\G'\b G')_{K'}$};
\draw [->] (C) to node {$j$} (A);
\draw [->] (A) to node {$\omega$} (B);
\draw [->] (B) to node {$i^*$} (D);
\draw [->, dashed] (C) to node {}(D);
\end{tikzpicture}
\end{center}
Here $j:\wedge^*\f{p}'\to\wedge^*\f{p}$ is induced by the natural inclusion $\f{p}'\hookrightarrow\f{p}$ and $i^*:C^\infty(\G\b G)_K\to C^\infty(\G'\b G')_{K'}$ is the restriction map. Thus $\iota^*(\omega)=i^*\circ\omega\circ j$. Since $\omega$ is in the $\pi$-component, we may assume that it is of the form $\phi\circ\psi$, where $\phi:\wedge^*\f{p}\to V_\pi$ is a $K$-equivariant map and $\psi:V_\pi\to C^\infty(\G\b G)_K$ is  a $(\f{g},K)$-equivariant map.
\begin{center}
\begin{tikzpicture}
\node at (0,1.5) {$\wedge^*\f{p}$};
\node at (2,1.5) {$V_\pi$};
\node at (5,1.5) {$C^\infty(\G\b G)_K$};
\node at (0,0) {$\wedge^*\f{p}'$};
\node at (5,0) {$C^\infty(\G'\b G')_{K'}$};
\node at (5,-1.5) {$\c$};
\draw [->] (.5,1.5)--(1.7,1.5);
\draw [->, dashed] (2,1.2)--(4.8,-1.5);
\draw [->] (2.3,1.5)--(3.9,1.5);
\draw [->] (0,.25)--(0,1.25);
\draw [->] (5,-.25)--(5,-1.25);
\draw [<-] (5,.25)--(5,1.25);
\node [left] at (0,.75) {$j$};
\node [right] at (5,.75) {$i^*$};
\node [above] at (1.15,1.5) {$\psi$};
\node [above] at (3.15,1.5) {$\phi$};
\node [right] at (5,-.75) {$\text{pr}_\mathbf{1}$};
\end{tikzpicture}
\end{center}
If $\iota^*(\omega)$ is not a top cohomology class of $Y$, then $\int_{Y_\G}\iota^*(\omega)=0$. So let us assume that $\iota^*(\omega)$ is a top cohomology class. Now by Lemma \ref{top} a top dimensional form is not exact if and only if it has a non-zero $G'$-invariant component. Thus $\iota^*(\omega)\ne 0$ means that its image has a component in the trivial one dimensional sub-representation of $C^\infty(\G'\b G')_{K'}$, consisting of constant functions. Let $\text{pr}_\mathbf{1}:C^\infty(\G'\b G')_{K'}\to\c$ be the projection map to the subspace of constant functions. Then the composition $\text{pr}_\mathbf{1}\circ i^*\circ\phi\circ\psi\circ j$ is a non-trivial homomorphism. In particular $\text{pr}_\mathbf{1}\circ i^*\circ\phi:V_\pi\to\c$ is a non-trivial $(\f{g}',K')$-homomorphism. But this contradicts our hypothesis. Hence $\mathcal{P}(Y_\G)$ does not have a $\pi$-component.
\end{proof}

\begin{remark}
It was brought to our notice that in his 2009 preprint \cite{kob09} Kobayashi had proved that if $\text{Hom}_{\f{g},K}(V_\pi,C^\infty(G'\b G))=0$ then $\mathcal{P}(Y_\G)$ has no $\pi$-component. This is same as Theorem \ref{main1}, since by Frobenius reciprocity $\text{Hom}_{\f{g},K}(V_\pi,C^\infty(G'\b G))=\text{Hom}_{(\f{g}',K')}(V_\pi,\c)$. But the proof is different. Kobayashi constructs an intertwining operator $R:C^\infty(\G\b G)\to C^\infty(G'\b G)$ by averaging over $\G'\b G'$. Then he shows that $\int_{Y_\G}\iota^*(\omega)=\int_{Y_\G}\iota^*(\phi\circ\psi)=\text{ev}_e\circ R\circ\phi\circ\psi\circ j(\text{vol})$, where $\text{ev}_e$ is the evaluation map at $e$ and $\text{vol}$ is the oriented norm one element of $\wedge^{\dim\f{p}}\f{p}$. Since $R\circ\phi\in\text{Hom}_{\f{g},K}(V_\pi,C^\infty(G'\b G))=0$, we have $\int_{Y_\G}\iota^*(\omega)=0$.
\end{remark}

\begin{proof}\textit{of Corollary.}
We will show that if $V_\pi$ is discretely decomposable as a $(\f{g}',K')$-module, then it satisfies the condition of Theorem \ref{main1}. We have
$V_\pi\cong\bigoplus_\tau m_\pi(\tau)V_\tau,$
where $\tau$ runs over irreducible unitary $(\f{g}',K')$-modules and $m_\pi(\tau)$ are the multiplicities. By \cite[Proposition 1.6]{kob97},
$\bar{V}_\pi\cong\hat{\bigoplus}_\tau m_\pi(\tau)\bar{V}_\tau,$ where  $\bar{V}_\tau$ and $\bar{V}_\pi$ are  the representation spaces of $G'$ obtained by taking completion of $V_\tau$ and $V_\pi$, respectively. Because of discrete decomposability we have $m_\pi(\mathbf{1})=\dim\text{Hom}_{\f{g}',K'}(\c,V_\pi)=\dim\text{Hom}_{\f{g}',K'}(V_\pi,\c)$. Let us assume that $\dim\text{Hom}_{\f{g}',K'}(V_\pi,\c)\ne 0$.  Then $(\mathbf{1},\bar{\c})=(\mathbf{1},\c)$ occurs as a subrepresentation of $(\pi,\bar{V}_\pi)$. Now we state a result of Moore \cite[Theorem 1]{moore}. Suppose that $G'$ is a non-compact subgroup of a simple Lie group $G$. Then $G'$ has the property that if the restriction to $G'$, of any unitary representation of $G$, has a fixed vector then this vector is also fixed under the $G$ action. In our context this means that $\bar{V}_\pi$ has a one dimensional $G$-invariant subspace. But it is given that $V_\pi$, and hence $\bar{V}_\pi$, is irreducible. Thus we arrive at a contradiction. This finishes the proof.
\end{proof}

\begin{remark}
For concrete calculations we will only be using the weaker vanishing result of Corollary \ref{main2}. The reason for this is that there is a simple criterion given by Kobayashi to check discrete decomposability, while we are not aware of a simple criterion to check if $\text{Hom}_{\f{g}',K'}(V_\pi,\c)=0$. The only independent application of Theorem \ref{main1}, that we know of, is the result that the non-$G$-invariant part of the Poincar\'e dual of a totally geodesic submanifold of $\G\b \text{SO}_0(2,p)/\text{SO}(2)\times SO(p)$, of the form $\G'\b\text{SO}_0(1,p)/\text{SO}(p)$, has Hodge type $(p,0)$ or $(0,p)$. In fact one can specify two $\theta$-stable Borel subalgebras $\f{b}_1$ and $\f{b}_2$, such that the cohomology class must have a non-zero $A_{\f{b}_i}$-component, for at least one $i=1,2$. This is shown in \cite{kob09}. It will be interesting to see more such direct applications of Theorem \ref{main1}. 
\end{remark}


\section{Proof of Theorem~\ref{main}}\label{proof}

We wish to apply the aforementioned vanishing results to concrete examples of compact Hermitian locally symmetric spaces associated to simple Lie groups. We say $(\f{g}_0,\f{g}'_0)$ is a \emph{Hermitian symmetric pair} if the centre $Z(\f{k}_0)$ of $\f{k}_0$ is non-trivial and $Z(\f{k}_0)\subset\f{g}'_0$. For applying Lemma \ref{comana}, we must have that $(\f{g}_0,\f{g}'_0)$ is a Hermitian symmetric pair. On the other hand for applying Corollary \ref{main2}, we must know for which $\theta$-stable parabolic subalgebras $\f{q}$, $A_\f{q}$ is discretely decomposable as a $(\f{g}',K')$-module. This has already been done by Kobayashi and Oshima in \cite{kobosh} based on the criterion given by Kobayashi in \cite{kob94},\cite{kob97}. From now on we assume that $(\f{g}_0,\f{g}'_0)$ is a Hermitian symmetric pair. In this case $\f{p}=\f{p}^++\f{p}^-$, where $\f{p}^+$ and $\f{p}^-$ are the $+i$ and $-i$ eigenspaces of the complex structure $J$. A $\theta$-stable parabolic subalgebra $\f{q}$ is called \emph{holomorphic} or \emph{anti-holomorphic} if $\f{p}^+\subset\f{q}$ or $\f{p}^-\subset\f{q}$, respectively. By \cite[Thoerem 4.1 (4)]{kobosh}, $A_\f{q}$ is discretely decomposable as a $(\f{g}',K')$-module if $\f{q}$ is holomorphic or anti-holomorphic. Note that $\f{q}$ is holomorphic or anti-holomorphic then the pair $(R^+(\f{q}),R^-(\f{q}))$ is of the form $(p,0)$ or $(0,q)$, respectively. Hence both Lemma \ref{comana} and Corollary \ref{main2} applies. For the cases where $\f{q}$ is neither holomorphic nor anti-holomorphic, a list is given in \cite[Tables C.3, C.4]{kobosh}, of triples $(\f{g}_0,\f{g}'_0,\f{q})$ which satisfy that $A_\f{q}$ is discretely decomposable as a $(\f{g}',K')$-module. (Strictly speaking, this list includes, rather than consists of, all the cases where $\f{q}$ is neither holomorphic or anti-holomorphic.)
Let $(\f{g}_0,\f{g}'_0)$ be a pair and $D$ be the family of $\theta$-stable parabolic subalgebras $\f{q}$ of $\f{g}_0$, such that $A_\f{q}$ is discretely decomposable as a $(\f{g}',K')$-module. Let $G$ be a linear Lie group with Lie algebra $\f{g}_0$. By Theorem \ref{con}, any involution $\sigma$ of $G$ (and hence of $\f{g}_0$) induces an involution of $X_\G=\G\b G/K$, for some maximal compact $K$ and some torsion-free uniform lattice $\G$ in $G$, and the Poincar\'e dual $\mathcal{P}(Y_\G)$ of the resulting  fixed point submanifold $Y_\G$ is not $G$-invariant.  Moreover the fixed point submanifold of the involution of $X_\G$ induced by $\sigma\theta$ is complementary dimensional to $Y_\G$ and its Poincar\'e dual is again $G$-invariant. 
Let $t$ be the minimum of the complex dimensions of the classes $\mathcal{P}(Y_\G)$ and its complement.
Let $Q$ be the set of all $\theta$-stable parabolic subalgebras $\f{q}$, which satisfy $R^+(\f{q})=R^-(\f{q})\le t$. Then by Lemma \ref{comana} and Corollary \ref{main2} we can conclude that $\mathcal{P}(Y_\G)$ has a non-zero $A_\f{q}$-component for some $\f{q}\in Q\setminus D$. We are only interested in the cases where $Q\setminus D$ is singleton, so that we get a precise non-vanishing result. This is the idea of proof of Theorem \ref{main}.

Now let us explain some notations. Let $\f{t}_0$ be a maximal abelian subalgebra of $\f{k}_0$. Fix a subset  $\Phi_\f{k}^+\subset\Phi_\f{k}$ of positive roots of $(\f{k},\f{t})$. Let $\Phi_n$ denote the set of weights of the adjoint representation of $\f{k}$ on $\f{p}$. Let $\Phi:=\Phi_\f{k}\cup\Phi_n$. For each $\alpha\in\Phi$ let $\f{g}_\alpha$ denote the corresponding root or weigh space. Then any $\theta$-stable parabolic subalgebra (up to the equivalence $\f{q}\sim\f{q}'$ if $A_\f{q}\cong A_{\f{q}'}$) is of the form
$\f{q}_\lambda=\bigoplus_{\langle\lambda,\alpha\rangle\ge 0}\f{g}_\alpha,$
where $\lambda$ is in some Euclidean space $\mathbb{E}$ containing $(i\f{t}_0)^*$ as a sub-Euclidean space and $\lambda$ is \emph{dominant} with respect to $\Phi_\f{k}^+$. We could have taken $\mathbb{E}$ to be equal to $(i\f{t}_0)^*$, but we allow a larger space for ease of calculation. The $\theta$-stable parabolic subalgebras will be identified by this parameter $\lambda$. 
If we fix a basis $(\epsilon_1,\cdots,\epsilon_m)$ of $\mathbb{E}$, then $\lambda$ can be written as $\sum_{i=1}^m a_i\epsilon_i$. For $\f{g}_0=\f{su}(p,q)$, we now indicate the Euclidean space $\mathbb{E}$, a set of positive roots of $(\f{k},\f{t})$ and the set of weights of $\f{p}$.

\vspace{.2cm}

    \noindent$\f{g}_0=\f{su}(p,q)$.
    
    \vspace{.1cm}
    
    \noindent$\mathbb{E}:=\r^{p+q}$ with standard orthonormal basis $(\epsilon_1,\cdots,\epsilon_{p+q})$.
    
    \vspace{.1cm}
    
    \noindent$\Phi_\f{k}^+=\{\epsilon_i-\epsilon_j:1\le i<j\le p\text{ and }p+1\le i<j\le p+q\}$.
    
    \vspace{.1cm}
    
    \noindent$\Phi_n=\{\pm(\epsilon_i-\epsilon_j):1\le i\le p\text{ and }p+1\le j\le p+q\}$.
    
    \vspace{.1cm}
    
    \noindent Dominant condition: $a_1\ge\cdots\ge a_p$ and $a_{p+1}\ge\cdots\ge a_{p+q}$.
    
\vspace{.2cm}

In Table \ref{list} we indicate the pairs $(\f{g}_0,\f{g}'_0)$, where $\f{g}_0=\f{su}(p,q)$, and the coordinates of $\lambda$, such that $A_{\f{q}_\lambda}$ is discretely decomposable as a $(\f{g}',K')$-module. This is taken from \cite[Table C.3]{kobosh}.

 \begin{table}[ht]
    \centering
    
    \begin{tabular}{|c|c|}
    \hline
     Hermitian symmetric pairs & coordinates of $\lambda$ \\
    \hline
    \hline
     $(\f{su}(p,q),\f{su}(k)\oplus\f{su}(p-k,q)\oplus\f{u}(1)),$ & $a_p\ge a_{p+1}$,\\
     $q\ge p>k$ & $(\exists p+1\le l\le p+q-1)$  \\
     &$ a_l\ge a_1$ and $a_p\ge a_{l+1}$, \\
     & or $a_{p+q}\ge a_1$\\
    \hline 
     ($\f{su}(p,q),\f{su}(p,q-k)\oplus\f{su}(k)\oplus\f{u}(1)),$ & $a_{p+q}\ge a_1$,\\
     $q\ge p, k< q$ & $(\exists 1\le l\le p-1)$ $ a_l\ge a_{p+1}$ \\
     & and $a_{p+q}\ge a_{l+1}$, \\
     & or $a_p\ge a_{p+1}$\\
    \hline
\end{tabular}
\caption{Discretely decomposable $A_{\f{q}_\lambda}$ when $\f{g}_0=\f{su}(p,q)$}
    \label{list}
\end{table}

Now we are ready to implement the idea described in the beginning of this section to some of the cases given in Table \ref{list}. 

\begin{proof}\emph{of Theorem \ref{main}.}
We retain the notations used before in this section. We consider the pairs in Table \ref{list}, when $k=1$. We first deal with the pair in the first row of Table \ref{list}, with $k=1$. In this case $t=q$. 
Let $\lambda=\sum_{i=1}^{p+q}a_i\epsilon_i$ be such that $\f{q}_\lambda\in Q\setminus D$. Then we must have $R^+(\f{q}_\lambda)=R^-(\f{q}_\lambda)\le q$ and 
\begin{equation}\label{ineq}
    a_1>a_s>a_p,\text{ for some }p+1\le s\le p+q.
\end{equation}
Fix $s$. Let $x,y,z,l,m,n\in\n$ satisfying $x+y+z=p$ and $l+m+n=q$, such that, $a_i>a_s$ for $1\le i\le x$ and $p+1\le i\le p+l$, $a_i=a_s$ for $x+1\le i\le x+y$ and $p+l+1\le i\le p+l+m$, $a_s>a_i$ for $x+y+1\le i\le p$ and $p+l+m+1\le i\le p+q$. Note that by (\ref{ineq}), $x,z,m\ge 1$. Then we have
\begin{align}
q\ge R^+(\f{q}_\lambda)&\ge x(m+n)+yn=n(x+y)+mx\label{ineq1}\\ 
q\ge R^-(\f{q}_\lambda)&\ge z(l+m)+yl=l(y+z)+mz\label{ineq2}\\ 
2q\ge R(\f{q}_\lambda)&\ge
x(m+n)+y(n+l)+z(l+m)\label{ineq3}
\end{align}
From (\ref{ineq3}) we get that at least one of $m+n,n+l$ and $l+m$ is less than or equal to $2$. Let us assume $l+m\le 2$. We will show a contradiction. Since $n\ge q-2$, (\ref{ineq1}) implies $R^+(\f{q}_\lambda)\ge (q-2)(x+y)+xm$. If $x+y\ge 2$ then $R^+(\f{q}_\lambda)\ge 2(q-2)+1>q$, since $q\ge 5$. Thus $x+y\le 1$, which implies $x=1$ and $y=0$. Hence $z=p-1$. From (\ref{ineq2}) we get $R^-(\f{q}_\lambda)\ge (p-1)(l+m)+ly$. If $l+m=2$ then $R^-(\f{q}_\lambda)\ge 2(p-1)>q$, by assumption. Thus $l+m\le 1$ which implies $l=0$ and $m=1$. Hence $n=q-1$. This implies $R^+(\f{q}_\lambda)\ge q$ and $R^-(\f{q}_\lambda)\ge p-1$. But then $R^+(\f{q}_\lambda)= q$. This implies $a_1>a_{p+1}>a_{p+2}\ge\cdots\ge a_{p+q}\ge a_2\ge\cdots\ge a_p$. The inequality $a_{p+q}\ge a_2$ must be strict since otherwise $R^-(\f{q})=(p-1)q>q$. Hence  $R^-(\f{q}_\lambda)=p-1$. This is a contradiction since $R^+(\f{q}_\lambda)\ne R^-(\f{q}_\lambda)$. Thus $l+m>2$. Similarly $m+n>2$. Thus we must have $l+n\le 2$. So $m\ge q-2$. Then (\ref{ineq1}) and (\ref{ineq2}) implies that $R^+(\f{q}_\lambda)\ge (q-2)x$ and $R^-(\f{q}_\lambda)\ge (q-2)z$. Since $q\ge 5$ we must have $x=1=z$. Hence $y=p-2$. If $l\ge 1$ then (\ref{ineq2}) implies that $R^-(\f{q}_\lambda)\ge p-1+q-2>q$, which is a contradiction. Hence $l=0$. Similarly (\ref{ineq1}) implies $n=0$. Thus $m=q$ and we have
\begin{equation}\label{eq}
    a_1>a_2=\cdots=a_{p-1}=a_{p+1}=\cdots=a_{p+q}>a_p.
\end{equation}
All the parameters $\lambda=\sum_{i=1}^{p+q}a_i$, satisfying (\ref{eq}), give the same $\theta$-stable parabolic subalgebra $\f{q}_\lambda$. It satisfies   $R^+(\f{q}_\lambda)=q=R^-(\f{q}_\lambda)$. Thus $Q\setminus D$ is indeed singleton.

For the pair in the second row of Table \ref{list}, with $k=1$, we repeat the above calculation with $p$ and $q$ interchanged. This time $l+m\le 2$ or $m+n\le 2$ does not lead to a contradiction when $q=p+1$. In that case we have $|Q\setminus D|=3$. But if we assume $q=p$ or $q>p+1$, then again we get that $Q\setminus D$ is a singleton. 
\end{proof}

\begin{remark}
1. As a part of \cite[Theorem 1.1]{monsan} it was already proved that, when $G=\text{SU}(p,q)$ with $p<q-1, q\ge 5$, there exists a uniform lattice $\G$ and a geometric class in $H^*(\G\b \text{SU}(p,q)/\text{S}(\text{U}(p)\times \text{U}(p))$ having an $A_{\f{q}_\lambda}$-component, where $\lambda=\epsilon_{p+1}-\epsilon_{p+q}$.

\vspace{.1cm}

\noindent 2.  For all Hermitian symmetric pairs $(\f{g}_0,\f{g}'_0)$ listed in the tables given in \cite{kobosh}, except the ones in Table \ref{list}, either the set $Q\setminus D$ contains more than one elements or the set $Q$ itself is singleton. In the first case we don't get a precise non-vanishing result while all examples in the second case has already been studied in \cite{monsan}.

\vspace{.1cm}

\noindent 3. The statement of Theorem \ref{main} also holds when $\f{g}_0=\f{su}(2,2)$. It is rather easy to prove using our method. 
\end{remark}

\noindent\textbf{Acknowledgements:} While preparing this paper the author had the good fortune of meeting Toshiyuki Kobayashi in a conference. He pointed out his paper with Y. Oshima \cite{kobosh}, which is vitally used in this work. On a later visit to him, he also shared his 2009 preprint \cite{kob09}, which has a beautiful direct application of Theorem \ref{main1}. The author is also grateful to Parameswaran Sankaran for many helpful comments.

\end{document}